\newtheorem{theorem}{Theorem}[section]
\newtheorem{lemma}[theorem]{Lemma}
\theoremstyle{definition}
\newtheorem{definition}[theorem]{Definition}
\newtheorem{problem}[theorem]{Problem}
\numberwithin{equation}{section}
\title[quasi-convex smooth optimization by a comparison oracle] {On quasi-convex smooth optimization problems by a comparison oracle}
\author[Gasnikov]{Alexander\,V.\,Gasnikov}
\address[A.\,V.\,Gasnikov]{Innopolis University, Moscow Institute of Physics and Technology, Caucasus Mathematical Center, and Steklov Mathematical Institute of Russian Academy of Sciences.}
\email{\tt gasnikov@yandex.ru}
\author[Alkousa]{Mohammad S. Alkousa}
\address[M.~S.~Alkousa]{Innopolis University, Moscow Institute of Physics and Technology}
\email{\tt mohammad.alkousa@phystech.edu}
\author[Lobanov]{Alexander\,V.\,Lobanov}
\address[A.\,V.\,Lobanov]{Moscow Institute of Physics and Technology, Skolkovo Institute of Science and Technology, and ISP RAS Research Center for Trusted Artificial Intelligence.}
\email{\tt lobbsasha@mail.ru}
\author[Dorn]{Yuriy\,V.\,Dorn}
\address[Y.\,V.\,Dorn]{Institute for Artificial Intelligence, Lomonosov Moscow State University, and Moscow Institute of Physics and Technology }
\email{\tt dornyv@my.msu.ru}
\author[Stonyakin]{Fedor~S.~Stonyakin}
\address[F.~S.~Stonyakin]{Moscow Institute of Physics and Technology, V. I. Vernadsky Crimean Federal University, and Innopolis University.}
\email{\tt fedyor@mail.ru}
\author[Kuruzov]{Ilya\,A.\,Kuruzov}
\address[I.\,A.\,Kuruzov]{Research Center for Artificial Intelligence,  and Innopolis University}
\email{\tt kuruzov2014@mail.ru}
\author[Singh]{Sanjeev\,R.\,Singh}
\address[S.\,R.\,Singh]{Times School of Media, Bennett University}
\email{\tt sanjeev.singh@bennett.edu.in}
\keywords{quasi-convex function, gradient-free algorithm, smooth function, comparison oracle, normalized gradient descent.}
\begin{document}

\begin{abstract}
Frequently, when dealing with many machine learning models, optimization problems appear to be challenging due to a limited understanding of the constructions and characterizations of the objective functions in these problems. Therefore, major complications arise when dealing with first-order algorithms, in which gradient computations are challenging or even impossible in various scenarios. For this reason, we resort to derivative-free methods (zeroth-order methods). This paper is devoted to an approach to minimizing quasi-convex functions using a recently proposed (in \cite{Zhang2024Comparisons}) comparison oracle only. This oracle compares function values at two points and tells which is larger, thus by the proposed approach, the comparisons are all we need to solve the optimization problem under consideration. The proposed algorithm to solve the considered problem is based on the technique of comparison-based gradient direction estimation and the comparison-based approximation normalized gradient descent. The normalized gradient descent algorithm is an adaptation of gradient descent, which updates according to the direction of the gradients, rather than the gradients themselves. We proved the convergence rate of the proposed algorithm when the objective function is smooth and strictly quasi-convex in $\mathbb{R}^n$,  this algorithm needs $\mathcal{O}\left( \left(n D^2/\varepsilon^2 \right) \log\left(n D / \varepsilon\right)\right)$ comparison queries to find an $\varepsilon$-approximate of the optimal solution, where $D$ is an upper bound of the distance between all generated iteration points and an optimal solution. 
\end{abstract}

\maketitle

\section{Introduction}
Optimization may be regarded as the cornerstone in the realm of machine learning and its applications. Frequently, when dealing with many machine learning models, optimization problems appear. Thus, we must use optimization algorithms to solve these problems, for instance, advancements in stochastic gradient descent (SGD) such as ADAM \cite{Diederik2015method}, Adagrad \cite{Duchi2011Adaptive}, etc., serve as foundational methods for the training of deep neural networks and many other tasks. However, many optimization problems with challenges appear due to a limited understanding of the constructions and characterizations of the objective functions in these problems, such as black-box adversarial attacks on neural networks \cite{Chen2017Zeroth,Madry2018Towards,Papernot2017Practical} and policy search in reinforcement learning \cite{Salimans2017Evolution,Choromanski2018Optimizing}. Therefore, major complications arise when dealing with first-order algorithms, in which gradient computations are challenging or even impossible in various scenarios. For this reason, we resort to derivative-free methods (zeroth-order methods). 

Derivative-free or zeroth-order optimization is one of the oldest areas in optimization \cite{Rosenbrock1960,Fabian1967,Brent1973,Conn2009}, which constantly attracts attention from the learning community, mostly in connection to online learning in the bandit setup \cite{Bubeck2012Regret}, deep learning \cite{Chen2017Zeroth,Gao2018Black}, federated learning \cite{Dai2020Federated,Alashqar2023,Patel2022},  overparameterized models \cite{Lobanov2023Accelerated}, online optimization \cite{Bach2016Highly,Akhavan2022gradient}, and control system performance optimization \cite{Bansal2017Goal,Xu2022Violation}.

Derivative-free optimization methods with function evaluations have gained prominence, with a provable guarantee for convex optimization \cite{Duchi2015Optimal,Nesterov2017Random} and non-convex optimization \cite{Fang2018Near,Ji2019Improved,Zhang2022Zeroth,Balasubramanian2022Zeroth}. Among the literature for these methods, direct search methods \cite{Kolda2003Optimization} proceed by comparing function values, including the directional direct search method \cite{Audet2006Mesh} and the Nelder-Mead method \cite{Nelder1965simplex} as examples. However, the directional direct search method does not have a known rate of convergence, meanwhile, the Nelson-Mead method may fail to converge to a stationary point for smooth functions \cite{Dennis1991Direct}.

Recently, special attention has been paid to zeroth-order algorithms with so-called \textit{Comparioson Oracle} (or \textit{Order Oracle}) \cite{Zhang2024Comparisons,Lobanov2024Acceleration}. This oracle compares function values at two points and tells which is larger, thus by algorithms using this type of oracle, the comparisons are all we need to solve the optimization problem. A relevant result in \cite{Bergou2020Stochastic}, where it was proposed the stochastic three points method (STP) which uses, in $\mathbb{R}^n$, an oracle that compares three function values at once and achieves the oracle complexity $\widetilde{\mathcal{O}}\left( n / \varepsilon \right)$ in the smooth convex case ($\widetilde{\mathcal{O}}(\cdot)$ used to hide the logarithmic factors), and $\mathcal{O}\left( (n L/ \mu)\log \left(1/\varepsilon\right) \right)$ in the $L$-smooth $\mu$-strongly convex case. A little later, the authors in \cite{Gorbunov2020stochastic} proposed a version of STM with momentum and modified STP to the case of importance sampling, improving the oracle complexity estimate in the strongly convex case  $\mathcal{O}\left( \left(\widehat{L}/\mu\right) \log \left(1/\varepsilon\right) \right)$, where $\widehat{L}$ is a constant connected with Lipchitz constants $L_i$ of the i-th coordinate gradient, for $i = 1, \ldots, n$, of the objective function.  In \cite{Lobanov2024Acceleration}, the authors, by using the comparison oracle, proposed a new approach to create non-accelerated optimization algorithms in non-convex, convex, and strongly convex settings with oracle complexity $\widetilde{\mathcal{O}}\left( \widehat{L} F_0 / \varepsilon^2 \right)$, $\widetilde{\mathcal{O}}\left(\widehat{L} R^2 / \varepsilon \right)$ and $\widetilde{\mathcal{O}}\left( \left(\widehat{L}/\mu\right) \log \left(1/\varepsilon\right) \right)$ respectively, where $F_0 = f(x_0) - f(x_*)$, $R$ characterizes the distance between the initial point $x_0$ and the optimal solution $x_*$. Also, by using the comparison oracle, in \cite{Zhang2024Comparisons} several algorithms have been proposed for $L$-smooth functions with complexity $\mathcal{O}\left( \left(n LD^2 / \varepsilon\right) \log \left(n LD^2 / \varepsilon\right)\right)$ or $\mathcal{O}\left(n^2 \log\left(n LD^2 / \varepsilon\right)\right)$, where $D$ is an upper bound of the distance between all generated iteration points and an optimal solution. For $L$-smooth (possibly non-convex) functions it was proposed an algorithm with success probability at least $2/3$ to find an $\varepsilon$-approximate solution with complexity $\mathcal{O}\left( \left(n L/ \varepsilon^2\right) \log n \right)$. We mention here that to the best of our knowledge, by using the comparison oracle there is not any approach or a specific algorithm focusing on the class of quasi-convex functions. 

The study of quasi-convex functions is several decades old \cite{Fenchel1953,Luenberger1968,Nikaido1954}. Optimization problems with quasi-convex functions arise in numerous fields, including spanning economics \cite{Varian1985,Laffont2009}, control \cite{Seiler2010}, industrial organization \cite{Wolfstetter1999}, computer vision \cite{Kanade2006,Kanade2007}, computational geometry \cite{Eppstein2005}, and many other important fields of science. While quasi-convex optimization problems have many applications, it remains difficult for non-experts to specify and solve them in practice (see \cite{Agrawal2020Disciplined}).  For solving quasi-convex optimization problems there exist many works. A pioneering paper by Nesterov \cite{Nesterov1984Minimization}, was the first to suggest an efficient algorithm, namely \textit{Normalized Gradient Descent}, and prove that this algorithm attains an $\varepsilon$-optimal solution within $O\left(1/\varepsilon^2\right)$ iterations for differentiable quasi-convex functions. This work was later extended in \cite{Kiwiel2001}, showing that the same result may be achieved assuming upper semi-continuous quasi-convex objectives. In \cite{Konnov2003} it was shown how to attain faster rates for quasi-convex optimization, but with knowing the optimal value of the objective, which generally does not hold in practice.

In this paper, we proposed an approach to minimizing quasi-convex functions using a recently proposed comparison oracle only \cite{Zhang2024Comparisons}, thus by the proposed approach, the comparisons are all we need to solve the optimization problem under consideration. The proposed algorithm is based on the technique of comparison-based gradient direction estimation and the comparison-based approximation normalized gradient descent. The normalized gradient descent algorithm is an adaptation of gradient
descent, which updates according to the direction of the gradients, rather than the gradients themselves. We proved the convergence rate of the proposed algorithm when the objective function is smooth and strictly quasi-convex in $\mathbb{R}^n$,  this algorithm needs $\mathcal{O}\left( \left(n D^2/\varepsilon^2 \right) \log\left(n D / \varepsilon\right)\right)$ comparison queries to find an $\varepsilon$-approximation of the optimal solution. 

We mention here that, there are many interesting questions we left for future works, such as the question about the possibility of improving the concluded complexity in this paper. Also,  what is about the comparison oracle for the class of $(L_0, L_1)$-smooth functions \cite{gorbunov2024methods,vankov2024optimizing}, which generalizes the class of $L$-smooth functions? and for non-smooth functions, or for weakly smooth functions? In addition to the previous questions, we plan to test the proposed approach by comparing it with other methods for simple examples and then for an applied real task in (for example) machine learning. 

The paper consists of an introduction and 3 main sections. In Sect.~\ref{sect_basics} we mention the problem statements with the basic facts, tools, and definitions that will be useful in the next sections of the paper. Sect.~\ref{section_EGDbyComp} is devoted to the scheme of estimation of gradient direction using the comparison oracle.  In Sect.~\ref{sect:quasicompamethod} we propose a comparison-based approximation adaptive normalized gradient descent algorithm and analyze its convergence for solving the problem under consideration.

\section{Fundamentals and basic definitions}\label{sect_basics}

Let $f: \mathbb{R}^n \longrightarrow \mathbb{R}$, an $L$-smooth function, this means that it holds the following inequality 
\begin{align*}
\|\nabla f(x)-\nabla f(y) \|_* \leq L \| x - y\| \quad \forall \, x, y \in \mathbb{R}^n, 
\end{align*}
where $\|\cdot\|_*$ is a conjugate norm of $\|\cdot\|$.  In this paper, we consider the following optimization problem
\begin{equation}\label{main_problem}
\min\limits_{x\in \mathbb{R}^n} f(x). 
\end{equation}

We denote by $x_*$ to a solution of the problem \eqref{main_problem}, and by $f^* = f(x_*)$ to the optimal value of the objective function $f$. 

\begin{definition}[Quasi-Convexity]\label{def:quasiconvex}
	A common definition is as follows. We say that a function $f: \mathbb{R}^n \longrightarrow \mathbb{R}$ is quasi-convex if any $\alpha$-sublevel-set of $f$ is convex, i.e., for any $\alpha \in \mathbb{R}$, the set $\mathcal{L}_{\alpha}(f) : = \{x \in \mathbb{R}^n: f(x) \leq \alpha \}$ is convex. 
	
	We mention another equivalent definition as follows. We say that a function $f$ is quasi-convex, if for any $x, y \in \mathbb{R}^n$ such that $f(y) \leq f(x)$, it follows that 
	\[
	\langle \nabla f(x) , y - x \rangle \leq 0. 
	\]
	
	The equivalence between these two definitions can be found in \cite{boyd2004convex}. We further say that f is \textit{strictly-quasi-convex} \cite{Hazan2015Beyond}, if it is quasi-convex and its gradients vanish only at the global minima, i.e.,
	\[
	\forall y \in \mathbb{R}^n,\;\;  f(y)>  f^* \quad \Longrightarrow \quad \|\nabla f(y)\|_* > 0.
	\]
	In other words, 
	\[
	\exists \gamma >0, \quad \text{such that} \quad \|\nabla f(y)\|_* \geq \gamma > 0, \quad \forall y \in \mathbb{R}^n.  
	\]
\end{definition}

Following \cite{Nesterov_book} (see Subsec. 3.2.2), let $f$ is a given function and let us fix some $y \in \mathbb{R}^n$, for each gradient $ \nabla f (x)$ at the point $x \in \mathbb{R}^n$, we introduce the following auxiliary value
\begin{equation}\label{eq:vfDef}
v_f(x, y)= 
\begin{cases}
\left\langle\frac{\nabla f(x)}{\|\nabla f(x)\|_{*}},x-y\right\rangle; & \nabla f(x) \ne \textbf{0},\\
0;  & \nabla f(x) = \textbf{0},
\end{cases}
\end{equation}
where the bold $\textbf{0}$ denotes the vector of elements $0$ in $\mathbb{R}^n$.

As shown in \cite{Nesterov_book}, for $y = x_*$, the value $v_f(x, x_*)$ can be equal to the length of the projection of $x_*$ onto the hyperplane orthogonal to the vector $\nabla f(x)$. If a numerical method can produce a point $x_k$ for which $v_f(x_k, x_*) \leq \varepsilon$ for some small enough $\varepsilon > 0$, then, for this point, under the assumption that the function is continuous, we can guarantee the achieving a suitable quality of the solution. Here we can consider not only convex but also quasi-convex problems.

\begin{lemma}[\cite{Nesterov_book}, Theorem 1.5.5] 
	Let  $f(x)$ be a quasi-convex, and let  $f(x) \geq f(x_*)$. Define the function
	\begin{equation*}
	\omega(\tau) = \max\limits_{x\in \mathbb{R}^n } \{f(x) - f(x_*) : \|x-x_*\| \leq \tau \},
	\end{equation*}
	where $\tau$ is a positive number. Then, for any $y \in \mathbb{R}^n$ it holds the following
	\begin{equation*}
	f(y) - f(x_*) \leq \omega(v_f(y,x_*)).
	\end{equation*}
\end{lemma}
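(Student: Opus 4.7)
The plan is to reduce the inequality to a direct consequence of the gradient characterization of quasi-convexity by exhibiting, for the given $y$, a companion point $\bar{x}$ satisfying simultaneously $\|\bar{x} - x_*\| \leq v_f(y,x_*)$ (up to an arbitrarily small perturbation) and $f(\bar{x}) \geq f(y)$. Once such $\bar{x}$ is produced, the definition of $\omega$ closes the argument since $f(y) - f(x_*) \leq f(\bar{x}) - f(x_*) \leq \omega(v_f(y,x_*))$. The only case that requires work is $\nabla f(y) \neq \mathbf{0}$; if $\nabla f(y) = \mathbf{0}$ one reads off $v_f(y,x_*) = 0$ and the claim reduces to $f(y) \leq f(x_*)$, which I would handle as a degenerate case under the standing assumption that $x_*$ minimizes $f$ on the relevant level set (this is exactly where an extra hypothesis such as strict quasi-convexity becomes convenient).

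Assume therefore $\nabla f(y) \neq \mathbf{0}$, and set $\tau := v_f(y,x_*)$. Applying the gradient characterization of quasi-convexity to the hypothesis $f(x_*) \leq f(y)$ gives $\langle \nabla f(y), x_* - y\rangle \leq 0$, so $\tau \geq 0$. Next I would pick a direction $u \in \mathbb{R}^n$ with $\|u\| \leq 1$ that attains the dual norm, i.e. $\langle \nabla f(y), u \rangle = \|\nabla f(y)\|_*$; such $u$ exists by compactness of the closed unit ball in finite dimensions. For an auxiliary parameter $\delta > 0$, define
\[
\bar{x}_\delta := x_* + (\tau + \delta)\, u.
\]
Then $\|\bar{x}_\delta - x_*\| \leq \tau + \delta$, and a direct computation that splits $\bar{x}_\delta - y = (\bar{x}_\delta - x_*) + (x_* - y)$ and uses the definition \eqref{eq:vfDef} of $v_f$ yields
\[
\langle \nabla f(y),\, \bar{x}_\delta - y \rangle = (\tau + \delta)\|\nabla f(y)\|_* - \tau \|\nabla f(y)\|_* = \delta \|\nabla f(y)\|_* > 0.
\]

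Now invoke the contrapositive of the gradient characterization of quasi-convexity: $\langle \nabla f(y), z - y \rangle > 0$ forces $f(z) > f(y)$, since otherwise $f(z) \leq f(y)$ would contradict the sign of the inner product. Applied to $z = \bar{x}_\delta$, this gives $f(\bar{x}_\delta) > f(y)$; combined with the definition of $\omega$ applied at radius $\tau + \delta$ this yields $f(y) - f(x_*) < f(\bar{x}_\delta) - f(x_*) \leq \omega(\tau + \delta)$. Passing $\delta \to 0^+$ and using the right-continuity of $\omega$ then produces the claimed bound $f(y) - f(x_*) \leq \omega(\tau) = \omega(v_f(y,x_*))$.

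The main obstacle I expect is the limit step: one must justify $\omega(\tau + \delta) \to \omega(\tau)$ as $\delta \to 0^+$. This follows because $f$ is continuous (it is even $L$-smooth in our setting) and the feasible set $\{x : \|x - x_*\| \leq \tau + \delta\}$ is compact and varies continuously in $\delta$, so Berge's maximum theorem (or a short direct $\varepsilon$-argument exploiting uniform continuity of $f$ on a large enough ball) gives continuity of $\omega$. Everything else is either a one-line calculation or a standard duality argument for the norm, so once the correct $\bar{x}_\delta$ is written down the proof essentially finishes itself.
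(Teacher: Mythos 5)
The paper does not prove this lemma at all: it is quoted verbatim from Nesterov's book (Theorem 1.5.5 there) and used as a black box, so there is no in-paper proof to compare against. Your argument is essentially the standard one behind that result, correctly adapted to the quasi-convex setting: Nesterov's convex proof evaluates the first-order lower bound at the point $\bar{x} = x_* + \tau u$ lying on the hyperplane $\langle \nabla f(y), \cdot - y\rangle = 0$, whereas quasi-convexity only yields the strict contrapositive ``$\langle \nabla f(y), z-y\rangle > 0 \Rightarrow f(z) > f(y)$,'' which is exactly why your $\delta$-perturbation $\bar{x}_\delta = x_* + (\tau+\delta)u$ is needed; the computation $\langle \nabla f(y), \bar{x}_\delta - y\rangle = \delta\|\nabla f(y)\|_*$ is right, as is the choice of $u$ attaining the dual norm. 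Two remarks. First, the limit step can be simplified: rather than establishing right-continuity of $\omega$ via Berge's theorem, just let $\delta \to 0^+$ in $f(\bar{x}_\delta) > f(y)$ and use continuity of $f$ alone to get $f(\bar{x}_0) \geq f(y)$ with $\|\bar{x}_0 - x_*\| \leq \tau$, then apply the definition of $\omega(\tau)$ directly. Second, your caveat about the degenerate case $\nabla f(y) = \mathbf{0}$ is a genuine observation, not a cosmetic one: for a merely quasi-convex smooth $f$ a critical point need not be a global minimizer, so the inequality can fail there; the statement really does need strict quasi-convexity (or convexity, as in Nesterov's original) to cover that case, and in the paper's application $f$ is assumed strictly quasi-convex, so this is harmless. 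Overall the proof is correct.
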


Since in this paper, we study the optimization of smooth functions using comparisons, we mention the following main definition. 

\begin{definition}[Comparison oracle \cite{Zhang2024Comparisons}]
	For a function $f$, we define the \emph{comparison oracle} of $f$ as $O_f^{\text{comp}}:  \mathbb{R}^n  \times \mathbb{R}^n \longrightarrow \{-1,1\},$ such that
	\begin{align}\label{eq:comparison_oracle}
	O_{f}^{\text{comp}}(x, y)=
	\begin{cases}
	1;  & \text{if \; $f(x) \geq f(y)$,} \\
	-1; & \text{if \; $f(x)\leq f(y)$.} 
	\end{cases}
	\end{align}
	When $f(x) = f(y)$, we can take $1$ or $-1$ as an output of this oracle.   
\end{definition}

\section{Estimation of gradient direction by comparisons}\label{section_EGDbyComp}

In \cite{Zhang2024Comparisons}, it was shown that for a given point $ x  \in \mathbb{R}^n$ and a direction $ v\in \mathbb{R}^n$, we can use one comparison query to understand whether the inner product $ \langle \nabla f(x), v \rangle$ is roughly positive or negative. This inner product determines whether $x+v$ is following or against the direction of $\nabla f(x)$, also known as \emph{directional preference} (DP) in~\cite{karabag2021smooth}.

In \cite{Zhang2024Comparisons}, to estimate a gradient direction of a smooth function $f$ by using a comparison oracle only, the authors proposed an algorithm, called Comparison-based Gradient Direction Estimation, listed as Algorithm \ref{alg:comparison-GDE} below. In each iteration of Algorithm \ref{alg:comparison-GDE}, we need to determine a directional preference by Algorithm \ref{alg:DP}. Let us denote $\mathbb{B}_r(x) : = \left\{ y \in \mathbb{R}^n: \|y - x\| \leq r \right\}$. 

\begin{algorithm}[htp]
	\caption{Directional Preference (DP($x, v, \Delta$))  \cite{Zhang2024Comparisons}.}
	\label{alg:DP}
	\begin{algorithmic}[1]
		\REQUIRE Comparison oracle $O_{f}^{\text{comp}}$ of $f : \mathbb{R}^n \longrightarrow  \mathbb{R}$, $ x \in \mathbb{R}^n$, unit vector $ v \in \mathbb{B}_1(0)$, $\Delta>0$ a precision for directional preference. 
		\IF{$O_{f}^{\text{comp}} \left( x+\frac{2 \Delta}{L} v, x\right)=1$}
		\STATE Return ''$\langle \nabla f(x), v \rangle \geq - \Delta$'',
		\ELSE   
		\STATE (i.e., if $O_{f}^{\text{comp}}\left(x+\frac{2\Delta}{L} v, x\right)=-1$)
		\STATE Return ''$\langle \nabla f(x), v \rangle \leq  \Delta$''.
		\ENDIF
	\end{algorithmic}
\end{algorithm}

\begin{algorithm}[htp]
	\caption{Comparison-based Gradient Direction Estimation (Comparison-GDE($x,\delta,\gamma$)) \cite{Zhang2024Comparisons}.}\label{alg:comparison-GDE}
	\begin{algorithmic}[1]
		\REQUIRE Comparison oracle $O_{f}^{\text{comp}}$ of $f: \mathbb{R}^n \longrightarrow \mathbb{R}$, precision $\delta$, lower bound $\gamma$ on $\|\nabla f(x)\|_*$.   
		\STATE Set $\Delta := \frac{\delta \gamma}{4n^{3/2}} $. Denote $\nabla f( x) = (g_1,\ldots,g_n)^{\top}$. 
		\STATE Call Algorithm \ref{alg:DP} with inputs $( x, e_1, \Delta), \ldots, (x,e_n,\Delta)$ where $e_i$ is the $i^{\text{th}}$ standard basis with $i^{\text{th}}$ coordinate being 1 and others being 0. This determines whether $g_i\geq-\Delta$ or $g_i\leq\Delta$ for each $i\in \{1,2, \ldots, n\}$. Without loss of generality, we assume 
		\begin{equation*}
		g_i \geq -\Delta, \quad\forall i \in \{1, \ldots, n\}.
		\end{equation*}
		Otherwise, take a minus sign for the $i^{\text{th}}$ coordinate.
		\STATE We next find the approximate largest one among $g_1,\ldots,g_n$. Call  DP$\left(x,\frac{1}{\sqrt{2}}(e_1-e_2), \Delta\right)$ (Algorithm \ref{alg:DP}). This determines whether $g_1\geq g_2-\sqrt{2}\Delta$ or $g_2\geq g_1-\sqrt{2}\Delta$. If the former, call DP$\left(x,\frac{1}{\sqrt{2}}(e_1-e_3),\Delta\right)$. If the later, call DP$\left(x,\frac{1}{\sqrt{2}}(e_2-e_3),\Delta\right)$. Iterate this until $e_n$, we find the $i^{*}\in \{1, \ldots, n\}$ such that
		\begin{equation*}
		g_{i^*}\geq\max\limits_{1 \leq i \leq n} \left\{g_i-\sqrt{2}\Delta\right\}. 
		\end{equation*}
		\FOR{$i=1, 2 \ldots, n$ (except $i=i^{*}$)}
		\STATE  Initialize $\alpha_i\leftarrow 1/2$.
		\STATE Apply binary search to $\alpha_i$ in $\left\lceil \log_2\left( \frac{\gamma}{\Delta}\right)+1 \right\rceil$ iterations by calling DP$\left(x,\frac{\alpha_{i} e_{i^{*}}- e_i}{\sqrt{1+\alpha_{i}^{2}}}, \Delta\right)$. For the first iteration with $\alpha_i=\frac{1}{2}$, if $\alpha_i g_{i^*}-g_i\geq -\sqrt{2}\Delta$ we then take $\alpha_i=\frac{3}{4}$; if $\alpha_i g_{i^*}-g_i\leq\sqrt{2}\Delta$ we then take $\alpha_i=\frac{1}{4}$. Later iterations are similar. Upon finishing the binary search, $\alpha_i$ satisfies
		\begin{equation*}
		g_i-\sqrt{2}\Delta\leq \alpha_i g_{i^*}\leq g_i+\sqrt{2}\Delta.
		\end{equation*}
		\ENDFOR
		\STATE \textbf{Output:} $\widetilde{g}(x) = \frac{\alpha}{\|\alpha\|}$ where $\alpha=(\alpha_1,\ldots,\alpha_n)^{\top}$, $\alpha_i$ ($i\neq i^{*}$) is the output of the for loop, $\alpha_{i^{*}}=1.$
	\end{algorithmic}
\end{algorithm}

For Algorithm \ref{alg:comparison-GDE}, in \cite{Zhang2024Comparisons}, it was proved the following result. 
\begin{theorem}\label{thm:Comparison-GDE}
	For an $L$-smooth function $f: \mathbb{R}^n \longrightarrow \mathbb{R}$ and a point $ x \in \mathbb{R}^n$. If we are given a parameter $\gamma>0$ such that $\|\nabla f(x)\|_*\geq \gamma$, then by Algorithm \ref{alg:comparison-GDE} we obtain (as an output) an estimate $\widetilde{g}(x)$ of the direction of $\nabla f(x)$ using $O\left(n\log(n/\delta)\right)$ queries to the comparison oracle $O_{f}^{\text{comp}}$ of $f$ \eqref{eq:comparison_oracle}, which satisfies
	\begin{align*}
	\left\|\widetilde{g}(x)-\frac{\nabla f(x)}{\|\nabla f(x) \|_* }\right\|_*\leq\delta.
	\end{align*}
\end{theorem}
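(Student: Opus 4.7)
The proof naturally splits into three stages: correctness of the DP subroutine, a query count for Algorithm \ref{alg:comparison-GDE}, and the accuracy bound on $\widetilde g(x)$.

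First, I would verify Algorithm \ref{alg:DP}. By $L$-smoothness, for any unit vector $v$ and step $\eta>0$,
\begin{equation*}
\left|f(x+\eta v)-f(x)-\eta\langle \nabla f(x),v\rangle\right| \le \frac{L\eta^{2}}{2}.
\end{equation*}
Choosing $\eta=2\Delta/L$ makes the quadratic remainder exactly $2\Delta^{2}/L$, so the sign of $f(x+\eta v)-f(x)$---which is what the comparison oracle returns---agrees with the sign of $\langle\nabla f(x),v\rangle$ up to an additive slack of $\Delta$. This directly yields the two conclusions printed by DP.

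Second, I would tally the comparison queries. Step~2 of Algorithm \ref{alg:comparison-GDE} uses $n$ DP calls (one per coordinate); Step~3 uses $n-1$ DP calls in the tournament that locates $i^{*}$; the binary search in the \textbf{for} loop runs for $\lceil\log_{2}(\gamma/\Delta)+1\rceil$ DP calls on each of the $n-1$ indices $i\neq i^{*}$. Substituting $\Delta=\delta\gamma/(4n^{3/2})$ gives $\log_{2}(\gamma/\Delta)=O(\log(n/\delta))$, so the overall query count is $O(n\log(n/\delta))$.

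Third, and this is where the real work lies, I would convert the per-coordinate guarantees into the $\delta$-bound on the direction. After Step~2 we may assume $g_{i}\ge-\Delta$ for all $i$ (the sign flips applied to coordinates of the input are undone when reconstructing $\widetilde g$, so this is genuinely without loss of generality). Step~3 guarantees $g_{i^{*}}\ge\max_{i}g_{i}-\sqrt{2}\Delta$, and the binary search yields $|\alpha_{i}g_{i^{*}}-g_{i}|\le\sqrt{2}\Delta$ for every $i\neq i^{*}$, with $\alpha_{i^{*}}=1$. Setting $\alpha^{*}:=\nabla f(x)/g_{i^{*}}$, the pointwise bounds imply
\begin{equation*}
\|\alpha-\alpha^{*}\|_{2}\le\frac{\sqrt{2n}\,\Delta}{g_{i^{*}}}.
\end{equation*}
Because $\|\nabla f(x)\|_{\infty}\ge\|\nabla f(x)\|_{2}/\sqrt{n}\ge\gamma/\sqrt{n}$, the approximate maximum $g_{i^{*}}$ is of order $\gamma/\sqrt{n}$, and the choice $\Delta=\delta\gamma/(4n^{3/2})$ is tuned precisely so that $\|\alpha-\alpha^{*}\|_{2}=O(\delta/\sqrt{n})$. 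A final application of the elementary inequality $\|u/\|u\|-v/\|v\|\|\le 2\|u-v\|/\max(\|u\|,\|v\|)$ for nonzero $u,v$, combined with $\|\alpha\|_{2}\ge\alpha_{i^{*}}=1$, then transfers the bound on $\|\alpha-\alpha^{*}\|_{2}$ into the desired inequality $\|\widetilde g(x)-\nabla f(x)/\|\nabla f(x)\|_{*}\|_{*}\le\delta$.

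The main obstacle I anticipate is the careful bookkeeping in this last step: one must confirm that the lower estimate $g_{i^{*}}\gtrsim\gamma/\sqrt{n}$ survives the $\sqrt{2}\Delta$ slack from Step~3 (which is why the denominator in $\Delta$ carries $n^{3/2}$ rather than $\sqrt{n}$), and that the coordinatewise sign-flipping in Step~2 commutes with the direction-renormalization at the end. Everything else is linear algebra and the descent lemma.
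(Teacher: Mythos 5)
This theorem is not proved in the paper at all: the sentence immediately preceding it says the result ``was proved'' in \cite{Zhang2024Comparisons}, and the paper simply imports the statement. So there is no in-paper proof to compare against; what follows is an assessment of your reconstruction on its own terms. Your three-stage route --- descent-lemma correctness of DP, the query tally, and the transfer from the coordinatewise bounds on $\alpha$ to the normalized direction --- is exactly the argument of the cited source, and the quantitative steps do close: since all coordinates are sign-corrected to satisfy $g_i\ge-\Delta$ while $\max_i|g_i|\ge\gamma/\sqrt n>\Delta$, the tournament gives $g_{i^*}\ge\gamma/\sqrt n-\sqrt2\,\Delta\gtrsim\gamma/\sqrt n$; then $\|\alpha-\alpha^*\|_2\le\sqrt{2n}\,\Delta/g_{i^*}=O(\delta/\sqrt n)$, and your normalization inequality with $\|\alpha\|_2\ge\alpha_{i^*}=1$ delivers the $\delta$ bound (the factor $4n^{3/2}$ in $\Delta$ is indeed what absorbs the constants).

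Two points need more care than your sketch gives them. First, you take the binary-search postcondition $|\alpha_i g_{i^*}-g_i|\le\sqrt2\,\Delta$ for granted; it does not follow merely from running $\lceil\log_2(\gamma/\Delta)+1\rceil$ halvings, because the residual interval of width $2^{-T}$ in $\alpha_i$ produces an uncertainty $2^{-T}g_{i^*}$ in $\alpha_i g_{i^*}$, and $g_{i^*}$ is bounded \emph{below} by $\gamma/\sqrt n$ but not above by $\gamma$. One must instead argue from the invariant the DP answers maintain on the two endpoints of the surviving interval; a complete proof has to spell this out rather than cite the algorithm's own annotation. Second, the theorem is stated for a general dual norm $\|\cdot\|_*$, but every inequality you use ($\|\nabla f(x)\|_\infty\ge\|\nabla f(x)\|_2/\sqrt n$, the $\sqrt{2n}$ aggregation, unit vectors fed to DP) is Euclidean; the statement and your proof should both be read with $\|\cdot\|=\|\cdot\|_*=\|\cdot\|_2$. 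Neither issue changes the conclusion, but both are exactly the ``bookkeeping'' you flagged, and they are where a referee would push.
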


\section{Quasi-convex optimization by comparisons}\label{sect:quasicompamethod}

In this section, we study the problem \eqref{main_problem},  when the objective function $f$ is a quasi-convex, using the comparison oracle \eqref{eq:comparison_oracle}. Thus, let us formulate the problem under consideration \eqref{main_problem} in the following formulation. 
\begin{problem}[comparisons-based quasi-convex optimization]\label{main_problem2}
	In this problem, we are given query access to a comparison oracle $O_{f}^{\text{comp}}$ for an $L$-smooth strictly quasi-convex function $f: \mathbb{R}^n \longrightarrow \mathbb{R}$ whose minimum is achieved at a point $x_*$. The goal is solving the problem \eqref{main_problem} in the following sense: 
	\begin{center}
		For a given $\varepsilon>0$, find a point $\widehat{x}$ such that $v_{f} (\widehat{x}, x_*) \leq \varepsilon.$
	\end{center}
\end{problem}

For Problem \ref{main_problem2}, we will propose an algorithm (see Algorithm \ref{alg:quasi}), which applies Comparison-GDE (Algorithm \ref{alg:comparison-GDE}) with estimated gradient direction at each iteration to the Normalized Gradient Descent (NGD).

\begin{algorithm}[htp]
	\caption{Comparison based Approximation Adaptive Normalized  Gradient Descent (Comparison-AdaNGD).}
	\label{alg:quasi}
	\begin{algorithmic}[1]
		\REQUIRE $\varepsilon>0$, initial point  $x_1 \in \mathbb{R}^n$, $D>0$ such that $\|x - x_*\| \leq D; \; \forall x \in \mathbb{R}^n$. 
		\STATE Set $N = \frac{18 D^2}{\varepsilon^2}, \delta =\frac{\varepsilon}{2 D}, \gamma : = \varepsilon$ (a lower bound for $\|\nabla f(x)\|_*, \; \forall x \in \mathbb{R}^n$).
		\FOR{$k = 1, 2, \ldots, N$}
		\STATE $\widehat{g}_k \longleftarrow$ Comparison-GDE$(x_k, \delta, \gamma)$,
		\STATE $h_k = \frac{D}{\sqrt{2 k}}$,
		\STATE $x_{k+1} = x_k - h_k \widehat{g}_k. $
		\ENDFOR
	\end{algorithmic}
\end{algorithm}

To analyze Algorithm \ref{alg:quasi}, let us, at first, propose an algorithm called \textit{Approximation Normalized Adaptive Gradient Descent} (see Algorithm \ref{alg:ApproxAdap}) and analyze its convergence. 

\begin{algorithm}[htp]
	\caption{Approximation Adaptive Normalized  Gradient Descent (ApproxAdapt-NGD).}
	\label{alg:ApproxAdap}
	\begin{algorithmic}[1]
		\REQUIRE Number of iterations $N \geq 1$, initial point  $x_1 \in \mathbb{R}^n$, $D>0$ such that $\|x - x_*\| \leq D; \; \forall x \in \mathbb{R}^n$. 
		\FOR{$k = 1, 2, \ldots, N$}
		\STATE Calculate an estimate $\widetilde{g}_k$ of $\nabla f(x_k)$,
		\STATE $h_k = \frac{D}{\sqrt{2 k}}$,
		\STATE $x_{k+1} = x_k - h_k \widetilde{g}_k. $
		\ENDFOR
	\end{algorithmic}
\end{algorithm}

For Algorithm \ref{alg:ApproxAdap}, we have the following result. 

\begin{lemma}\label{lem:alg-ApproxAdap}
	Let $f: \mathbb{R}^n \longrightarrow \mathbb{R}$ be an $L$-smooth function. If at each iteration $k \geq 1$ of Algorithm \ref{alg:ApproxAdap}
	\begin{equation}\label{eq:1}
	\|\widetilde{g}_k\|_* = 1, \quad \text{and} \quad   \left\| \widetilde{g}_k - \frac{\nabla f(x_k)}{\|\nabla f(x_k)\|_*} \right\|_* \leq \delta,
	\end{equation}
	for $0 < \delta \leq 1 $. Then by Algorithm \ref{alg:ApproxAdap} we obtain a sequence $\{x_1, x_2 , \ldots, x_N\}$, such that
	\[
	\min\limits_{1 \leq k \leq N} v_f(x_k, x_*) \leq \frac{3 D}{\sqrt{2N}} + \delta D. 
	\]
\end{lemma}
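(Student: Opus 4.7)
The plan is to adapt the classical Normalized Gradient Descent analysis, allowing both an approximation error $\delta$ in the unit-vector estimate $\widetilde{g}_k$ and the time-varying step size $h_k=D/\sqrt{2k}$. The starting point is the squared-distance recursion. Expanding $\|x_{k+1}-x_*\|^2$ via $x_{k+1}=x_k-h_k\widetilde{g}_k$ and using $\|\widetilde{g}_k\|_*=1$ gives
\begin{equation*}
\|x_{k+1}-x_*\|^2 = \|x_k-x_*\|^2 - 2 h_k \langle \widetilde{g}_k, x_k-x_*\rangle + h_k^2.
\end{equation*}
The hypothesis $\bigl\|\widetilde{g}_k - \nabla f(x_k)/\|\nabla f(x_k)\|_*\bigr\|_*\leq\delta$ combined with Cauchy--Schwarz and the bound $\|x_k-x_*\|\le D$ yields $\langle\widetilde{g}_k, x_k-x_*\rangle \geq v_f(x_k, x_*) - \delta D$, so after rearrangement
\begin{equation*}
v_f(x_k, x_*) \leq \frac{\|x_k-x_*\|^2 - \|x_{k+1}-x_*\|^2}{2 h_k} + \frac{h_k}{2} + \delta D.
\end{equation*}

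Next I would sum these per-iteration inequalities from $k=1$ to $N$. Because $1/h_k$ is strictly increasing in $k$, the first sum does not telescope directly, and the natural move is Abel's summation:
\begin{equation*}
\sum_{k=1}^N \frac{\|x_k-x_*\|^2 - \|x_{k+1}-x_*\|^2}{2 h_k} = \frac{\|x_1-x_*\|^2}{2 h_1} + \sum_{k=2}^N \|x_k-x_*\|^2\left(\frac{1}{2 h_k}-\frac{1}{2 h_{k-1}}\right) - \frac{\|x_{N+1}-x_*\|^2}{2 h_N}.
\end{equation*}
Using $\|x_k-x_*\|\leq D$ uniformly and the positivity of the increments $\tfrac{1}{2h_k}-\tfrac{1}{2h_{k-1}}$, this telescopes to at most $D^2/(2 h_N) = D\sqrt{2N}/2$. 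For the step-size contribution, the standard integral estimate $\sum_{k=1}^N 1/\sqrt{k} \leq 2\sqrt{N}$ gives $\sum_{k=1}^N h_k/2 \leq D\sqrt{2N}/2$.

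Combining these two bounds I obtain $\sum_{k=1}^N v_f(x_k, x_*) \leq D\sqrt{2N} + N\delta D$, and then the elementary fact $\min_k v_f(x_k, x_*) \leq N^{-1}\sum_k v_f(x_k, x_*)$ (which holds regardless of the sign of any individual $v_f(x_k,x_*)$) yields
\begin{equation*}
\min_{1\le k\le N} v_f(x_k, x_*) \leq \frac{D\sqrt{2N}}{N} + \delta D = \frac{2D}{\sqrt{2N}} + \delta D \leq \frac{3D}{\sqrt{2N}} + \delta D,
\end{equation*}
which is the claim. The main obstacle, as is typical for non-constant step sizes, is executing the Abel summation cleanly while keeping the distance terms $\|x_k-x_*\|$ uniformly controlled by $D$; the rest is routine bookkeeping with Cauchy--Schwarz and the integral estimate on $\sum 1/\sqrt{k}$.
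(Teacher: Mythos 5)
Your proof is correct and follows essentially the same route as the paper's: the squared-distance recursion, Cauchy--Schwarz to absorb the $\delta$-error, Abel summation against the increasing $1/h_k$ with $\|x_k-x_*\|\le D$, and the estimate $\sum_{k=1}^N 1/\sqrt{k}\le 2\sqrt{N}$. The only difference is that you discard the nonpositive boundary term $-\|x_{N+1}-x_*\|^2/(2h_N)$, whereas the paper bounds it above by $+D^2/(2h_N)$, which is why you end up with the slightly sharper constant $2D/\sqrt{2N}$ in place of the stated $3D/\sqrt{2N}$ (and hence still imply the lemma).
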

\begin{proof}
The proof will be roughly similar (not quite as close) to the proof of Theorem 1.1 in \cite{levy2017online}. 
For any $k = 1, \ldots, N$, we have 
\[
\|x_{k+1} - x_*\|^2 \leq \|x_k - x_*\|^2 - 2 h_k \left\langle \widetilde{g}_k, x_k - x_* \right\rangle + h_k^2 \|\widetilde{g}_k\|_*^2.
\]

Since $\|\widetilde{g}_k\|_* = 1 \, \forall k \geq 1$, we have 
\[
\left\langle \widetilde{g}_k, x_k - x_*\right \rangle \leq \frac{1}{2h_k} \left( \|x_k - x_*\|^2  -  \|x_{k+1} - x_*\|^2 \right) + \frac{h_k}{2}. 
\]
Thus, we get
\begin{align*}
\left\langle \frac{\nabla f(x_k)}{\|\nabla f(x_k)\|_*}, x_k - x_* \right\rangle & 
\leq  \frac{1}{2h_k} \left( \|x_k - x_*\|^2  -  \|x_{k+1} - x_*\|^2 \right) + \frac{h_k}{2}+ 
\\& \quad +  \left\langle \frac{\nabla f(x_k)}{\|\nabla f(x_k)\|_*} - \widetilde{g}_k, x_k - x_* \right\rangle
\\& \leq \frac{1}{2h_k} \left( \|x_k - x_*\|^2  -  \|x_{k+1} - x_*\|^2 \right) + \frac{h_k}{2}  + \delta D.
\end{align*}
where in the last inequality we used the Cauchy-Schwartz inequality,  Eq.~\eqref{eq:1}, and the fact that $\|x - x_*\| \leq D$ for any $x \in \mathbb{R}^n$. Hence we have the following 
\begin{equation}\label{eq:2}
v_f(x_k, x_*) \leq  \frac{1}{2h_k} \left( \|x_k - x_*\|^2  -  \|x_{k+1} - x_*\|^2 \right) + \frac{h_k}{2}  + \delta D. 
\end{equation}

By taking the summation of both sides of Eq.~\eqref{eq:2} over $k = 1, \ldots, N$, we get the following 
\begin{equation*}
\sum_{k = 1}^{N}v_f(x_k, x_*) \leq \frac{1}{2}  \sum_{k = 1}^{N} \frac{1}{h_k} \left( \|x_k - x_*\|^2  -  \|x_{k+1} - x_*\|^2 \right) + \frac{1}{2} \sum_{k = 1}^{N} h_k  + \delta D N. 
\end{equation*}

Thus, 
\begin{align*}
N \cdot \min\limits_{1 \leq k \leq N} v_f(x_k, x_*) & \leq \sum_{k = 1}^{N}v_f(x_k, x_*)
\\& \leq \frac{1}{2}\sum_{k = 1}^{N} \|x_k - x_*\|^2 \left(\frac{1}{h_k} - \frac{1}{h_{k-1}}\right) + \frac{1}{2 h_N} \|x_{N+1} - x_*\|^2
\\& \quad  + \frac{1}{2} \sum_{k = 1}^{N} h_k + \delta D N.
\end{align*}
Where we denote $h_0 = \infty$. Therefore, we get 
\begin{align*}
N \cdot \min\limits_{1 \leq k \leq N} v_f(x_k, x_*) & \leq \frac{D^2}{2}\sum_{k = 1}^{N} \left(\frac{1}{h_k} - \frac{1}{h_{k-1}}\right) + \frac{D^2}{2 h_N}+ \frac{1}{2} \sum_{k = 1}^{N} h_k + \delta D N
\\& = \frac{D^2}{h_N}  + \frac{1}{2} \sum_{k = 1}^{N} h_k + \delta D N. 
\\& = D \sqrt{2N} + \frac{D}{2 \sqrt{2}} \sum_{k = 1}^{N} \frac{1}{\sqrt{k}} + \delta D N. 
\end{align*}

But $\sum\limits_{k = 1}^{N} \frac{1}{\sqrt{k}} \leq 2 \sqrt{N}$. Thus, we find
\begin{align*}
N \cdot \min\limits_{1 \leq k \leq N} v_f(x_k, x_*) & \leq D\sqrt{2N} + \frac{D \sqrt{N}}{\sqrt{2}} +  \delta D N = \frac{3D\sqrt{N}}{\sqrt{2}} +  \delta D N.
\end{align*}

Therefore, we conclude the following inequality
\[
\min\limits_{1 \leq k \leq N} v_f(x_k, x_*) \leq \frac{3 D}{\sqrt{2N}} + \delta D,
\]
which is the desired result.
\end{proof}

Now, let us prove the following lemma, which will be useful in the proof of the main theorem which explains the complexity of Algorithm \ref{alg:quasi}. 

\begin{lemma}\label{lemm3}
	Let $f: \mathbb{R}^n \longrightarrow \mathbb{R}$ be an $L$-smooth function, $D>0$ such that $\|x - x_*\| \leq D; \; \forall x \in \mathbb{R}^n$. For any given small enough $\varepsilon >0$, such that $\delta = \frac{\varepsilon}{2 D} \leq 1$, by Algorithm \ref{alg:quasi} we obtain a sequence $\{x_1, x_2, \ldots, x_N\}$, such that
	\begin{equation}\label{eq:solalg3}
	\min\limits_{1 \leq k \leq N} v_f(x_k, x_*)  \leq \varepsilon.
	\end{equation}
\end{lemma}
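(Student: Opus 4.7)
The plan is to combine Lemma~\ref{lem:alg-ApproxAdap} (which bounds $\min_k v_f(x_k,x_*)$ for any sequence generated by an approximate normalized gradient scheme) with Theorem~\ref{thm:Comparison-GDE} (which certifies the accuracy of Comparison-GDE) applied at every iteration, and then specialize the resulting bound to the parameter choices $N = 18D^2/\varepsilon^2$, $\delta = \varepsilon/(2D)$, $\gamma = \varepsilon$ prescribed by Algorithm~\ref{alg:quasi}.

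First I would verify that at every iteration $k$ the estimator $\widehat{g}_k$ returned by Comparison-GDE$(x_k,\delta,\gamma)$ satisfies both conditions of \eqref{eq:1}. The normalization $\|\widehat{g}_k\|_* = 1$ is immediate from the final output line of Algorithm~\ref{alg:comparison-GDE}. The approximation bound
\[
\left\|\widehat{g}_k - \frac{\nabla f(x_k)}{\|\nabla f(x_k)\|_*}\right\|_* \leq \delta
\]
follows from Theorem~\ref{thm:Comparison-GDE}, but only under the premise $\|\nabla f(x_k)\|_* \geq \gamma = \varepsilon$. This is precisely where the strict quasi-convexity of $f$ (Definition~\ref{def:quasiconvex}) enters: it furnishes a uniform positive lower bound $\gamma_0$ on $\|\nabla f(\cdot)\|_*$ throughout $\mathbb{R}^n$, and taking $\varepsilon$ small enough (in particular $\varepsilon \leq \gamma_0$, as permitted by the phrase "small enough $\varepsilon$" in the statement) ensures the hypothesis holds at every iterate. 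The additional smallness condition $\delta = \varepsilon/(2D) \leq 1$ required by Lemma~\ref{lem:alg-ApproxAdap} is already included in the statement's hypotheses.

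Once the conditions in \eqref{eq:1} are in hand, Lemma~\ref{lem:alg-ApproxAdap} (applied with $\widetilde{g}_k = \widehat{g}_k$) yields
\[
\min_{1 \leq k \leq N} v_f(x_k,x_*) \leq \frac{3D}{\sqrt{2N}} + \delta D.
\]
It then only remains to substitute the parameters: $\frac{3D}{\sqrt{2N}} = \frac{3D}{\sqrt{36 D^2/\varepsilon^2}} = \frac{\varepsilon}{2}$ and $\delta D = \frac{\varepsilon}{2D}\cdot D = \frac{\varepsilon}{2}$, so the two error contributions add up to exactly $\varepsilon$, yielding \eqref{eq:solalg3}.

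The main obstacle, in my view, is conceptual rather than computational: licensing the call to Theorem~\ref{thm:Comparison-GDE} at every iterate requires the uniform lower bound $\|\nabla f(x_k)\|_* \geq \varepsilon$. Strict quasi-convexity is precisely what makes this possible; plain quasi-convexity would permit iterates with arbitrarily small gradient norm and break the gradient direction estimation. The remainder of the proof is the bookkeeping needed to balance the optimization error $3D/\sqrt{2N}$ against the estimation error $\delta D$, forcing each to equal $\varepsilon/2$ under the prescribed choice of $N$ and $\delta$.
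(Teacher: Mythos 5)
Your proposal is correct and follows essentially the same route as the paper: invoke Theorem~\ref{thm:Comparison-GDE} to certify that each $\widehat{g}_k$ satisfies the hypotheses \eqref{eq:1} of Lemma~\ref{lem:alg-ApproxAdap}, then substitute $N = 18D^2/\varepsilon^2$ and $\delta = \varepsilon/(2D)$ to split the bound into $\varepsilon/2 + \varepsilon/2$. If anything, you are more careful than the paper, which simply asserts $\|\nabla f(x)\|_* > \varepsilon$ from the parameter setting in Algorithm~\ref{alg:quasi}, whereas you correctly note that strict quasi-convexity (not assumed in the lemma's own hypotheses) is what actually licenses this lower bound for small enough $\varepsilon$.
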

\begin{proof}
From line 2 in Algorithm \ref{alg:quasi}, we have $\|\nabla f(x)\|_* > \varepsilon, \; \forall x \in \mathbb{R}^n$. Thus, by Comparison-GDE (Algorithm \ref{alg:comparison-GDE}), we find that $\|\widehat{g}_k\|_* = 1, \; \forall k \geq 1,$ and $\left\| \widehat{g}_k - \frac{\nabla f(x_k)}{\|\nabla f(x_k)\|_*} \right\|_* \leq \delta \leq 1, \forall k \geq 1$ (see Theorem \ref{thm:Comparison-GDE}). Hence, from Lemma \ref{lem:alg-ApproxAdap}, we get the following inequality
\[
\min\limits_{1 \leq k \leq N} v_f(x_k, x_*) \leq\frac{3 D}{\sqrt{2N}} + \delta D. 
\]

Because, in Algorithm \ref{alg:quasi}, we have $N = \frac{18 D^2}{\varepsilon^2}$ and $\delta = \frac{\varepsilon}{2 D}$, we conclude the desired inequality Eq.~\eqref{eq:solalg3}. 
\end{proof}

\bigskip
For Algorithm \ref{alg:quasi}, we have the following result.

\begin{theorem}\label{theo:rate_quasi}
	Let $f: \mathbb{R}^n \longrightarrow \mathbb{R}$ be an $L$-smooth and strictly quasi-convex function. Let $\varepsilon > 0$. By Algorithm \ref{alg:quasi}, using $N = \mathcal{O}\left(\frac{D^2}{\varepsilon^2}\right)$ iterations and $\mathcal{O}\left(n \log\left(\frac{nD}{\varepsilon}\right)\right)$ calling of Algorithm \ref{alg:comparison-GDE} each iteration, i.e., using
	\[
	\mathcal{O}\left(\frac{nD^2}{\varepsilon^2} \log\left(\frac{n\delta}{\varepsilon}\right)\right)  
	\]
	queries, we obtain a sequence of points $\{x_1, x_2 , \ldots, x_N\}$, such that
	\[
	\min\limits_{1 \leq k \leq N} v_f(x_k, x_*) \leq \varepsilon. 
	\]
\end{theorem}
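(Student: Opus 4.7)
The plan is to prove Theorem \ref{theo:rate_quasi} by assembling two results that have already been established: the per-iteration oracle cost from Theorem \ref{thm:Comparison-GDE}, and the convergence estimate from Lemma \ref{lemm3}. At this stage the substantive work (the potential-type argument controlling $\min_k v_f(x_k,x_*)$ in terms of the step sizes $h_k = D/\sqrt{2k}$ and the direction-error tolerance $\delta$) has already been done inside Lemma \ref{lem:alg-ApproxAdap}. What remains is verification that the hypotheses line up across these lemmas, and the arithmetic of multiplying iterations by per-iteration queries.

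First I would verify that Algorithm \ref{alg:quasi} really meets the hypotheses of Lemma \ref{lemm3}. Strict quasi-convexity of $f$, as formulated in Definition \ref{def:quasiconvex}, yields a uniform bound $\|\nabla f(x)\|_* \geq \gamma$ with $\gamma := \varepsilon$ set in line~1 of the algorithm, so the precondition of Theorem \ref{thm:Comparison-GDE} is satisfied at every iterate $x_k$. Consequently each call to Comparison-GDE$(x_k,\delta,\gamma)$ returns a unit vector $\widehat{g}_k$ with $\|\widehat{g}_k - \nabla f(x_k)/\|\nabla f(x_k)\|_*\|_* \leq \delta$, i.e.\ exactly the condition \eqref{eq:1} required by Lemma \ref{lem:alg-ApproxAdap}. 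Since $\delta = \varepsilon/(2D) \leq 1$ for $\varepsilon \leq 2D$ (the regime of interest), Lemma \ref{lemm3} applies directly and gives $\min_{1 \leq k \leq N} v_f(x_k, x_*) \leq \varepsilon$ with $N = 18 D^2/\varepsilon^2$.

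Next I would assemble the query count. The number of outer iterations is $N = \mathcal{O}(D^2/\varepsilon^2)$. By Theorem \ref{thm:Comparison-GDE}, each inner call to Comparison-GDE uses $\mathcal{O}(n \log(n/\delta))$ comparison queries; substituting $\delta = \varepsilon/(2D)$ gives $\mathcal{O}(n \log(nD/\varepsilon))$ queries per iteration. Multiplying yields the stated total of
\[
\mathcal{O}\!\left(\frac{nD^2}{\varepsilon^2}\log\!\left(\frac{nD}{\varepsilon}\right)\right)
\]
comparison queries, and $\min_{1 \leq k \leq N} v_f(x_k, x_*) \leq \varepsilon$ follows from the preceding paragraph.

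Honestly, there is no significant obstacle: the theorem is a packaging result for Lemma \ref{lemm3} together with the oracle complexity recorded in Theorem \ref{thm:Comparison-GDE}. If anything subtle deserves attention it is the assumption that the uniform lower bound $\|\nabla f(x)\|_* \geq \varepsilon$ holds globally on $\mathbb{R}^n$; this is read off from the strict quasi-convexity clause in Definition \ref{def:quasiconvex}, and I would flag it explicitly in the write-up to clarify why line~1 of Algorithm \ref{alg:quasi} is justified in feeding $\gamma := \varepsilon$ into Comparison-GDE at every iterate.
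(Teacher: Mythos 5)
Your proposal is correct and follows essentially the same route as the paper's own proof: invoke the strict quasi-convexity to justify the lower bound $\gamma := \varepsilon$ on $\|\nabla f(x)\|_*$, apply Theorem \ref{thm:Comparison-GDE} to get the per-iterate guarantee \eqref{eq:1}, conclude via Lemma \ref{lemm3}, and multiply $N = 18D^2/\varepsilon^2$ by the $\mathcal{O}(n\log(n/\delta))$ per-call cost with $\delta = \varepsilon/(2D)$. Your explicit flag about the global gradient lower bound being the delicate point is a fair observation (the paper handles it just as tersely), but it does not change the argument.
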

\begin{proof}
Since $f$ is a strictly quasi-convex function, there is $\gamma : = \varepsilon > 0$ (arbitrarily small enough), such that $\|\nabla f(x)\|_* \geq \gamma, \; \forall x \in \mathbb{R}^n$ (see Definition \ref{def:quasiconvex}). Thus, by Theorem \ref{thm:Comparison-GDE}, we can guarantee that
\[
\left\|\widehat{g}_k - \frac{\nabla f(x_k)}{\|\nabla f(x_k)\|_*} \right\|_* \leq \delta \leq 1, \quad \text{and} \quad \|\widehat{g}_k\|_* = 1. 
\]

With these approximate gradient directions $\widehat{g}_k \, (k = 1, 2, \ldots, N)$, by Lemma \ref{lemm3}, we get the following inequality
\[
\min\limits_{1 \leq k \leq N} v_f(x_k, x_*) \leq \varepsilon,
\]
and the query complexity of Algorithm \ref{alg:quasi} (the total comparison oracle calls) is equal to 
\[
N \cdot \mathcal{O}\left(n \log\left(\frac{n}{\delta}\right)\right) = \frac{18D^2}{\varepsilon^2} \cdot \mathcal{O}\left(n \log\left(\frac{2 n D}{\varepsilon}\right)\right) =  \mathcal{O}\left(\frac{nD^2}{\varepsilon^2} \log\left(\frac{n D}{\varepsilon}\right)\right). 
\]
\end{proof}

\subsection*{Acknowledgment}
This research has been financially supported by The Analytical Center for the Government of the Russian Federation (Agreement No. 70-2021-00143 01.11.2021, IGK 000000D730324P540002)

\end{document}